\newtheorem{thm}{Theorem}[section]
\newtheorem{lem}[thm]{Lemma}
\newtheorem*{remark}{Remark}
\newtheorem*{thma}{Theorem A}
\numberwithin{equation}{section}
\newcommand{\ImPt}{\mathrm{Im}\,}
\newcommand{\calU}{\mathcal{U}}
\newcommand{\bfi}{\mathbf{i}}
\newcommand{\bfrho}{\boldsymbol{\rho}}
\newcommand{\calB}{\mathcal{B}}
\newcommand{\bbC}{\mathbb{C}}
\newcommand{\bbR}{\mathbb{R}}
\newcommand{\bbH}{\mathbb{H}}
\newcommand{\bfdelta}{\boldsymbol{\delta}}
\begin{document}

\title[$L^p-L^q$ boundedness of Bergman-type operators]{$L^p-L^q$ boundedness of Bergman-type operators over the Siegel upper half-space}

\thanks{The first author was supported by the National Natural Science
Foundation of China grants 11571333, 11471301.}

\author[C. Liu]{Congwen Liu}

\address{School of Mathematical Sciences,
         University of Science and Technology of China,
         Hefei, Anhui 230026,
         People's Republic of China.\\
and \\
Wu Wen-Tsun Key Laboratory of Mathematics,
USTC, Chinese Academy of Sciences}

\email{cwliu@ustc.edu.cn}

\author[J. Si]{Jiajia Si}
\address{School of Mathematical Sciences,
         University of Science and Technology of China,
         Hefei, Anhui 230026,
         People's Republic of China.}
\email{sijiajia@mail.ustc.edu.cn}

\author[P. Hu]{Pengyan Hu}
\address{Colledge of Mathematics and Statistics,
         Shenzhen University, Shenzhen, Guangdong 518060,
         People's Republic of China.}

\email{pyhu@szu.edu.cn}

\begin{abstract}
We characterize the $L^p-L^q$ boundedness of Bergman-type operators over the Siegel upper half-space.
This extends a recent result of Cheng et. al. (Trans. Amer. Math. Soc. 369:8643--8662, 2017) to higher dimensions.
\end{abstract}

\keywords{Siegel upper half-space; Bergman type operators; $L^p-L^q$ boundedness}
\subjclass{Primary 32A35, 47G10; Secondary 32A26, 30E20}

\maketitle

\section{Introduction}
\label{intro}

Let $\bbC_{+}:=\{z\in \bbC: \ImPt z >0\}$ be the upper half-plane.
For $\alpha>0$, consider the integral operator
\begin{equation*}
T_{\alpha} f(z) = \int\limits_{\bbC_{+}} \frac {f(w)} {(z-\bar{w})^{\alpha}} dA(w), \quad z\in \bbC_{+}.
\end{equation*}
where $dA$ is the Lebesgue measure on $\bbC_{+}$.

Very recently, Cheng, Fang, Wang and Yu \cite{CFWY17} characterized the $L^p-L^q$ boundedness of $T_{\alpha}$ as follows.

\begin{thma}[{\cite[Theorem 5]{CFWY17}}]
Let $\alpha>0$ and $1\leq  p,q \leq \infty$.
\begin{enumerate}
\item[(i)]
If $\alpha>2$ then $T_{\alpha}:L^{p}(\bbC_{+})\to L^{q}(\bbC_{+})$ is unbounded for any $1\leq  p,q \leq \infty$.

\item[(ii)]
If $0<\alpha \leq 2$, then $T_{\alpha}:L^{p}(\bbC_{+})\to L^{q}(\bbC_{+})$ is bounded if and only if $p,q$ satisfy
\begin{equation*}
1<p< \frac {2}{2-\alpha} \quad \text{and} \quad  \frac{1}{q} = \frac{1}{p} + \frac{\alpha}{2} - 1.
\end{equation*}
\end{enumerate}
\end{thma}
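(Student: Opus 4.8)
The plan is to handle parts (i) and (ii) together by first extracting the two necessary conditions dictated by the symmetries of $T_{\alpha}$, and then establishing sufficiency in the range of (ii) by reducing the problem to classical one-dimensional inequalities. Throughout I write $z=x+iy$, $w=u+iv$ with $y,v>0$, so that $z-\bar w=(x-u)+i(y+v)$ and $|z-\bar w|=\big((x-u)^2+(y+v)^2\big)^{1/2}$; in particular the kernel is smooth on the diagonal and singular only as $x\to u$ with $y,v\to 0$. Two symmetries are available. First, $T_{\alpha}$ commutes with the horizontal translations $f\mapsto f(\cdot-h)$, $h\in\bbR$; the standard argument with widely separated $x$-translates of a fixed bump shows that a nonzero operator commuting with translations can be bounded $L^{p}\to L^{q}$ only if $q\ge p$. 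Second, $T_{\alpha}$ is homogeneous under the dilations $f\mapsto f(\delta\,\cdot)$: a change of variables gives $T_{\alpha}f_{\delta}(z)=\delta^{\alpha-2}(T_{\alpha}f)(\delta z)$, and matching the powers of $\delta$ in $\|T_{\alpha}f_{\delta}\|_{q}\le M\|f_{\delta}\|_{p}$ forces $\tfrac1q=\tfrac1p+\tfrac\alpha2-1$. These two facts already give part (i): when $\alpha>2$ the index relation forces $q<p$, contradicting $q\ge p$, so $T_{\alpha}$ is unbounded for every $(p,q)$; and they show the index relation is necessary in part (ii).

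For the sufficiency in (ii) I would bound $|T_{\alpha}f|$ by the positive kernel $|z-\bar w|^{-\alpha}$ and exploit that, at fixed heights $y,v$, the $u$-integration is a convolution on $\bbR$ in the horizontal variable. Applying Young's inequality in $x$ with exponent $r$ given by $\tfrac1r=1+\tfrac1q-\tfrac1p$ (so $r=2/\alpha$ by the index relation, and $\big\|(x^{2}+(y+v)^{2})^{-\alpha/2}\big\|_{L^{r}_{x}}=c\,(y+v)^{-\alpha/2}$, the integral converging since $r\alpha=2>1$), followed by Minkowski's integral inequality in $v$, reduces everything to the one-dimensional operator
\[
SG(y)=\int_{0}^{\infty}(y+v)^{-\alpha/2}\,G(v)\,dv,\qquad G(v)=\|f(\cdot,v)\|_{L^{p}_{x}},
\]
for which one must show $S:L^{p}(\bbR_{+})\to L^{q}(\bbR_{+})$; indeed $\|T_{\alpha}f\|_{L^{q}(\bbC_{+})}=\|SG\|_{L^{q}(\bbR_{+})}$ and $\|f\|_{L^{p}(\bbC_{+})}=\|G\|_{L^{p}(\bbR_{+})}$ up to constants, and these estimates also justify the a.e.\ absolute convergence of the defining integral.

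The kernel $(y+v)^{-\alpha/2}$ is homogeneous of degree $-\alpha/2$, and its $L^{p}\to L^{q}$ mapping is governed by exactly the index relation above. For $0<\alpha<2$ I would dominate it by the Riesz kernel, $(y+v)^{-\alpha/2}\le|y-v|^{-\alpha/2}$ for $y,v>0$, and (after extending $G$ by zero to $\bbR$) invoke the one-dimensional Hardy--Littlewood--Sobolev inequality, which yields boundedness precisely for $1<p<q<\infty$ with $\tfrac1q=\tfrac1p+\tfrac\alpha2-1$; unwinding the constraint $q<\infty$ gives exactly $1<p<\tfrac{2}{2-\alpha}$. For $\alpha=2$ the kernel is $(y+v)^{-1}$ and $S$ is the Hilbert operator, bounded on $L^{p}(\bbR_{+})$ for every $1<p<\infty$ by Hardy's form of Hilbert's inequality. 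This settles the ``if'' direction of (ii).

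It remains to rule out the endpoints $p=1$ and $p=\tfrac{2}{2-\alpha}$ (equivalently $q=\infty$), and this sharp failure is the step I expect to be the main obstacle, since it cannot be read off from a classical inequality but must be transferred back to $T_{\alpha}$ through explicit extremizing sequences. For $p=1$ (hence $q=2/\alpha$) I would take an approximate identity $f_{n}$ concentrating at a point $w_{0}\in\bbC_{+}$ with $\|f_{n}\|_{1}=1$; Fatou's lemma gives $\liminf_{n}\|T_{\alpha}f_{n}\|_{q}\ge\big\|(\,\cdot-\bar w_{0})^{-\alpha}\big\|_{q}$, but $\int_{\bbC_{+}}|z-\bar w_{0}|^{-\alpha q}\,dA(z)=c\int_{0}^{\infty}(y+v_{0})^{\,1-\alpha q}\,dy$ diverges exactly because $\alpha q=2$, so $T_{\alpha}$ is unbounded there. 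The remaining endpoint $q=\infty$ then follows by duality: the adjoint of $T_{\alpha}$ has a kernel of the same structural form, and its $L^{1}\to L^{p'}$ bound with $p'=2/\alpha$ is precisely the $p=1$ endpoint just shown to fail, whence $T_{\alpha}:L^{2/(2-\alpha)}\to L^{\infty}$ is unbounded. Assembling the necessary and the sufficient directions yields (ii).
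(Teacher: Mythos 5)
Your proposal is correct, and on every substantive step it takes a different route from the paper (which proves this statement as the $n=1$ case of Theorem \ref{thm:main}). For the unboundedness when $\alpha>2$, you combine the dilation scaling (which the paper also uses, in Lemma \ref{lem:necty1}) with the H\"ormander translation argument showing a nonzero translation-commuting operator needs $q\ge p$; the paper instead tests on the explicit family $f_N(z)=\bfrho(z,\mathbf{i})^{-N}$ and extracts a contradiction from Gamma-function asymptotics. Your route is cleaner, and the needed hypothesis $q<\infty$ for the translation argument is automatic since the index relation forces $1/q\ge \alpha/2-1>0$. For sufficiency you exploit the product structure $\bbC_+\cong\bbR\times\bbR_+$: Young's inequality in $x$ with $r=2/\alpha$ plus Minkowski in $v$ reduces matters to the one-dimensional kernel $(y+v)^{-\alpha/2}$, handled by HLS (via $(y+v)^{-\alpha/2}\le|y-v|^{-\alpha/2}$) or Hilbert's inequality; the paper instead runs a weak-type Schur test (Lemma \ref{lem:variantofschur}) off the sublevel-set volume bound $|\{|\bfrho(z,w)|<R\}|\lesssim R^{n+1}$ of Lemma \ref{lem:volume}. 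Your reduction is more elementary but is genuinely tied to $n=1$, whereas the weak-type Schur test is exactly what survives the passage to the Siegel upper half-space, where no such splitting of the kernel is available. For the endpoint $p=1$, your approximate-identity-plus-Fatou argument and the paper's Berezin-transform identity both exhibit the same obstruction, namely that the kernel slice $(z-\bar w_0)^{-\alpha}$ fails to lie in $L^{2/\alpha}$ because $\int_0^\infty(y+v_0)^{-1}\,dy$ diverges; your version avoids the reproducing-kernel machinery at the cost of a limiting argument. The remaining endpoint is dispatched by duality in both treatments. The only cosmetic slip is the equality $\|T_\alpha f\|_{L^q}=\|SG\|_{L^q}$, which should be an inequality $\lesssim$, as your own Minkowski step indicates; and, contrary to your worry, the endpoint failure is not the hard part --- your argument for it is complete.
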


The purpose of this note is to extend the above result to the several complex variables setting.

We fix a positive integer $n$ throughout this paper and let $\bbC^n = \bbC\times \cdots \times \bbC$
denote the $n$-dimensional complex Euclidean space.
For $z\in \bbC^n$, we use the notation
\[
z=(z^{\prime},z_{n}), \quad \text{where } z^{\prime}=(z_1,\ldots,z_{n-1})\in \bbC^{n-1} \text{ and } z_{n}\in \bbC^1.
\]
The Siegel upper half-space in $\bbC^n$ is the set
\[
\calU := \left\{ z\in \bbC^n: \ImPt z_{n} > |z^{\prime}|^2 \right\}.
\]
This domain is biholomorphically equivalent to the unit ball of $\bbC^n$, and its boundary
$b\calU:= \left\{ z\in \bbC^n: \ImPt z_{n} = |z^{\prime}|^2 \right\}$ is
the standard representation of the Heisenberg group $\bbH^{n-1}$.
See \cite[Chapters 9--10]{Kra09} and \cite[Chapter XII]{Ste93}.

For real parameter $\alpha>0$, we consider the integral operator
\[
T_{\alpha} f(z) := \int\limits_{\calU} \frac {f(w)} { \bfrho(z,w)^{\alpha}} dV(w), \quad z\in \calU,
\]
where
\begin{equation*}
\bfrho(z,w) ~:=~ \frac {i}{2} (\bar{w}_{n}-z_{n})
- \langle z^{\prime}, w^{\prime} \rangle
\end{equation*}
with
\[
\langle z^{\prime},w^{\prime} \rangle := z_1\bar{w}_1 + \cdots + z_{n-1}\bar{w}_{n-1},
\]
and $dV$ is the Lebesgue measure on $\bbC^{n}$.
These operators are modelled on the Bergman projection on $\calU$.
Recall that the Bergman projection $P$ on $\calU$ is given by
\begin{equation*}
P f(z) ~=~ \frac {n!}{4\pi^{n}}\, T_{n+1} f(z) ~=~ \frac {n!}{4\pi^{n}}\, \int\limits_{\calU} \frac {f(w)} {\bfrho(z,w)^{n+1}} dV(w),
\quad z\in \calU.
\end{equation*}

Our main result is the following

\begin{thm}\label{thm:main}
Let $\alpha>0$ and $1\leq  p,q \leq \infty$.
\begin{enumerate}
\item[(i)]
If $\alpha>n+1$ then $T_{\alpha}$ is unbounded from $L^{p}(\mathcal{U})$ to $L^{q}(\mathcal{U})$ for any $1\leq  p,q \leq \infty$.

\item[(ii)]
If $0<\alpha \leq n+1$, then $T_{\alpha}:L^{p}(\mathcal{U})\to L^{q}(\mathcal{U})$ is bounded if and only if $p,q$ satisfy
\begin{equation}\label{eqn:nscondn}
1<p< \frac {n+1}{n+1-\alpha} \quad \text{and} \quad  \frac{1}{q} = \frac{1}{p} + \frac{\alpha}{n+1} - 1.
\end{equation}
\end{enumerate}
\end{thm}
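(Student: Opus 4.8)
The plan is to exploit the parabolic dilation and Heisenberg translation symmetries of $\calU$, which together force the scaling relation between $p$ and $q$, and then to prove the remaining boundedness by a Schur-type test combined with an explicit computation of the kernel integrals. Concretely, $\calU$ is invariant under the dilations $\bfdelta_t(z^{\prime},z_n)=(t z^{\prime}, t^2 z_n)$ and under the Heisenberg translations, and one checks that $\bfrho(\bfdelta_t z,\bfdelta_t w)=t^2\,\bfrho(z,w)$. Testing $T_\alpha$ on functions rescaled by $\bfdelta_t$ and tracking the Jacobian (which scales like $t^{2(n+1)}$, since $\dim_{\bbR}\calU=2n$ from $z^{\prime}$ and $2$ from $z_n$, giving real dimension $2n+2=2(n+1)$) shows that boundedness $L^p\to L^q$ is possible only when $\tfrac1q=\tfrac1p+\tfrac{\alpha}{n+1}-1$. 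This is a clean necessary condition and I expect it to require only a change of variables.

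For part (i) and the necessity half of part (ii), I would argue by contradiction using the scaling above together with a careful examination of the boundary behavior of the kernel. The upper bound $p<\frac{n+1}{n+1-\alpha}$ should come from testing against a family of functions concentrated near the boundary $b\calU$, or equivalently from the requirement that the kernel integrals $\int_{\calU}|\bfrho(z,w)|^{-\alpha}\,dV(w)$ converge in an appropriate weighted sense; the divergence when $\alpha>n+1$ (so that even local integrability near the diagonal fails, since $|\bfrho|^{-\alpha}$ is too singular relative to the volume) yields unboundedness for every $(p,q)$, proving (i). The endpoint exclusions $p=1$ and $p=\frac{n+1}{n+1-\alpha}$ typically fail by a logarithmic-divergence or weak-type argument; I would isolate these as separate test-function computations.

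For the sufficiency direction I plan to use a Schur test adapted to the homogeneous structure. The natural auxiliary weight is a power of the defining function $r(z):=\ImPt z_n-|z^{\prime}|^2$, say $h(z)=r(z)^{-s}$ for a suitable exponent $s$ to be chosen. The crux is the evaluation of integrals of the form
\[
\int_{\calU} \frac{r(w)^{-s}}{|\bfrho(z,w)|^{\alpha}}\,dV(w) \quad\text{and its dual},
\]
for which I would first use the Heisenberg translations and parabolic dilations to reduce to the single base point $z=(0,\tfrac{i}{2})$, where $\bfrho$ takes the simple form $\tfrac{i}{2}\bar w_n-\ldots$; the remaining integral should be computable by passing to coordinates adapted to $(w^{\prime}, \RePt w_n, r(w))$ and invoking a Beta-type or hypergeometric identity of the kind signaled by the \texttt{hyperg} macro in the preamble. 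The main obstacle, and the step on which the whole argument hinges, is obtaining sharp asymptotics for this kernel integral—precisely matching the exponent arithmetic so that the Schur inequalities hold with the critical exponents dictated by \eqref{eqn:nscondn}. Once the two Schur integrals are shown to be comparable to $r(z)^{-s+(\text{correction})}$ with the correct power, the weighted Schur test (in the non-diagonal $L^p\to L^q$ form, which I would phrase through a Young/Hölder interpolation between the two estimates) closes the proof.
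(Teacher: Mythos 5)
Your dilation argument for the exponent relation $\tfrac1q=\tfrac1p+\tfrac{\alpha}{n+1}-1$ matches the paper's Lemma \ref{lem:necty1} and is fine. However, there are two genuine gaps in the rest of the plan.

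First, for part (i): your proposed mechanism --- that for $\alpha>n+1$ ``even local integrability near the diagonal fails'' --- is wrong, because $\bfrho$ does not vanish on the interior diagonal: $\bfrho(z,z)=\ImPt z_n-|z^{\prime}|^2>0$ for $z\in\calU$, so $|\bfrho(z,w)|^{-\alpha}$ is locally bounded on $\calU\times\calU$ no matter how large $\alpha$ is. The singularity lives only at the boundary and at infinity. Moreover, the scaling relation alone does not rule out all $(p,q)$ when $\alpha>n+1$: it forces $q<p$, but such pairs exist in $[1,\infty]$ (e.g.\ $\tfrac1q=\tfrac{\alpha}{n+1}-1$ with $p=\infty$ is admissible whenever $\alpha\le 2(n+1)$). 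The paper needs a genuinely quantitative second step: it tests on the family $f_N(z)=\bfrho(z,\bfi)^{-N}$, computes $\|f_N\|_p$ and $\|T_\alpha f_N\|_q$ exactly via the reproducing-type identity (Lemma \ref{lem:keylemma}) and Lemma \ref{lem:keylem2}, and extracts from Stirling's formula the asymptotics $\|f_N\|_p\sim 2^N N^{-n/p-1/(2p)}$ and $\|T_\alpha f_N\|_q\sim 2^N N^{-n/q-1/(2q)+\alpha-n-1}$; the boundedness inequality then forces $N^{\frac12(\frac{\alpha}{n+1}-1)}\le C'$ for all $N$, which is the contradiction. You would need some substitute for this computation.

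Second, and more seriously, your sufficiency argument cannot work as stated for most of the range of $\alpha$. The weighted Schur integrals you propose, $\int_{\calU} r(w)^{-s}|\bfrho(z,w)|^{-\alpha}\,dV(w)$, converge (by the paper's Lemma \ref{lem:keylem2}) if and only if $-s>-1$ and $\alpha+s>n+1$, i.e.\ $n+1-\alpha<s<1$; such an $s$ exists only when $\alpha>n$. For $0<\alpha\le n$ these integrals are $+\infty$ for every choice of $s$, so no weighted Schur test of this form can close the argument. In addition, the classical weighted Schur test is a diagonal ($L^p\to L^p$) device, and the ``Young/H\"older interpolation'' you gesture at for the off-diagonal case is exactly the missing ingredient. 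The paper's route is different and avoids both problems: it proves the volume estimate $|\{z:|\bfrho(z,w)|<R\}|\lesssim R^{n+1}$ (Lemma \ref{lem:volume}, using Heisenberg translations to reduce to $w^{\prime}=0$), concludes that $Q_\alpha(\cdot,w)=\bfrho(\cdot,w)^{-\alpha}$ lies in weak-$L^{(n+1)/\alpha}$ uniformly in each variable, and then applies a weak-type Schur test (Lemma \ref{lem:variantofschur}, a kernel form of Young's inequality for weak-$L^r$ kernels with $\tfrac1q=\tfrac1p+\tfrac1r-1$). If you want to salvage your approach you should replace the weighted Schur test by this weak-type version; the hypergeometric/Beta evaluation you anticipate is then unnecessary for sufficiency, though identities of exactly that kind (Lemmas \ref{lem:keylemma} and \ref{lem:keylem2}) are what the paper uses for the necessity and part (i) computations.
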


\begin{remark}
Our proof also shows that Theorem \ref{thm:main} remains true if $T_{\alpha}$ is replaced by the integral operator
\[
S_{\alpha} f(z) := \int\limits_{\calU} \frac {f(w)} { |\bfrho(z,w)|^{\alpha}} dV(w), \quad z\in \calU.
\]
\end{remark}

\section{Preliminaries}

We begin by recalling the definition of the Heisenberg group and some basic facts which can be found in
\cite[Chapter XII]{Ste93}. The Heisenberg group $\bbH^{n-1}$ is the set
\[
\bbC^{n-1} \times \bbR = \{ [\zeta,t]: \zeta\in \bbC^{n-1}, t\in \bbR\},
\]
endowed with the group operation
\[
[\zeta,t]\cdot [\eta,s]=[\zeta+\eta, t+s+2 \mathrm{Im}\langle\zeta,\eta\rangle],
\]
where $\langle\zeta,\eta\rangle := \zeta_1 \overline{\eta}_1 + \cdots + \zeta_{n-1}\overline{\eta}_{n-1}$.
To each element $h=[\zeta,t]$ of $\bbH^{n-1}$, we associate the following (holomorphic) affine self-mapping of
$\calU$:
\begin{equation}\label{eqn:groupaction}
h:\; (z^{\prime},z_{n}) \longmapsto (z^{\prime}+\zeta, z_{n}+t+ 2i \langle z^{\prime}, \zeta\rangle + i|\zeta|^2).
\end{equation}
These mappings are simply transitive on the boundary $b\calU$ of $\calU$,
so we can identify the Heisenberg group with $b\calU$ via its action on the origin
\[
\bbH^{n-1} \ni [\zeta,t] ~\longmapsto~ (\zeta, t+i|\zeta|^2) \in b\calU.
\]
Also, it is easy to check that
\begin{equation}\label{eqn:h-inv}
\bfrho(h(z),h(w))=\bfrho(z,w)
\end{equation}
for any $z,w\in \calU$ and any $h\in \bbH^{n-1}$.

\begin{lem}\label{lem:volume}
For any fixed $w\in \calU$ and any $R>0$, we have
\begin{equation}\label{eqn:equi-volume}
\big|\{z\in\mathcal{U}:|\bfrho(z,w)|<R\}\big| ~\leq~ \frac {2^{n+1} \pi^n}{(n-1)!} R^{n+1}.
\end{equation}
Here and in the sequel, $|E|=V(E)$ denotes the Lebesgue measure of $E\subset \bbC^n$.

\end{lem}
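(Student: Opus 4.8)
The plan is to exploit the Heisenberg invariance (\ref{eqn:h-inv}) in order to place $w$ in a normalized position, and then to estimate the resulting set by Fubini's theorem. First I would observe that each affine map $h$ in (\ref{eqn:groupaction}) is measure-preserving: since $h$ is holomorphic and its complex Jacobian matrix is triangular with $1$'s on the diagonal (the only off-diagonal entries, $\partial h_n/\partial z_j = 2i\bar{\zeta}_j$, lie in the last row, i.e.\ below the diagonal), one has $\det_{\bbC} Dh = 1$, so the real Jacobian equals $|\det_{\bbC} Dh|^2 = 1$. Given $w \in \calU$, set $s := \ImPt w_n - |w'|^2 > 0$; a direct check shows that $w = h(w_0)$ for $w_0 = (0, is)$ and a suitable $h \in \bbH^{n-1}$. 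Then (\ref{eqn:h-inv}) gives $|\bfrho(z,w)| = |\bfrho(h^{-1}(z), w_0)|$, and the measure-preserving change of variables $z \mapsto h^{-1}(z)$ reduces the problem to estimating
\begin{equation*}
\big|\{z \in \calU : |\bfrho(z, w_0)| < R\}\big|, \qquad w_0 = (0, is).
\end{equation*}

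For this normalized target the kernel simplifies dramatically. Writing $z = (z', z_n)$, one computes $\bfrho(z, w_0) = \frac{i}{2}(-is - z_n) = -\frac{i}{2}(z_n + is)$, so that the condition $|\bfrho(z, w_0)| < R$ is exactly $|z_n + is| < 2R$, i.e.\ $z_n$ lies in the Euclidean disk of radius $2R$ centred at $-is$. The membership $z \in \calU$ contributes the single further constraint $\ImPt z_n > |z'|^2$. Note that the disk condition forces $\ImPt z_n < 2R - s < 2R$, which in particular bounds the admissible set in the $z'$-variable.

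Finally I would estimate by Fubini, integrating in $z'$ first. For fixed $z_n$, the slice $\{z' : |z'|^2 < \ImPt z_n\}$ is a ball in $\bbC^{n-1}$ of radius $(\ImPt z_n)^{1/2}$, whose volume equals $\frac{\pi^{n-1}}{(n-1)!}(\ImPt z_n)^{n-1} \leq \frac{\pi^{n-1}}{(n-1)!}(2R)^{n-1}$ by the bound $\ImPt z_n < 2R$. Integrating this over the disk $\{|z_n + is| < 2R\}$, which has area $\pi(2R)^2 = 4\pi R^2$, yields
\begin{equation*}
\big|\{z \in \calU : |\bfrho(z, w_0)| < R\}\big| ~\leq~ \frac{\pi^{n-1}}{(n-1)!}(2R)^{n-1}\cdot 4\pi R^2 ~=~ \frac{2^{n+1}\pi^n}{(n-1)!}R^{n+1},
\end{equation*}
which is precisely (\ref{eqn:equi-volume}). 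The computation is essentially routine; the only points demanding care are the verification that the group action preserves Lebesgue measure and the degenerate case $n=1$, where the $z'$-slice is absent and the bound reduces to estimating the area of a disk by $4\pi R^2$. I expect no serious obstacle beyond bookkeeping of constants, since decoupling the two constraints (rather than integrating over the exact region) is exactly what produces the clean constant in (\ref{eqn:equi-volume}).
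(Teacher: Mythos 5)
Your proof is correct and follows essentially the same route as the paper: use the Heisenberg group action (with the observation that it preserves Lebesgue measure) to reduce to a normalized center, note that the two constraints force $|z'|^2 < 2R$ and $z_n$ into a disk of radius $2R$, and then decouple them to get the product bound $\pi(2R)^2\cdot\frac{\pi^{n-1}}{(n-1)!}(2R)^{n-1}$. The only cosmetic difference is that the paper translates $w$ to $(0', w_n - i|w'|^2)$ via $h=[-w',0]$ rather than all the way to $(0', is)$, which changes nothing in the computation.
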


\begin{proof}
We first note that
\begin{equation}\label{eqn:equi-volume1}
\big|\{z\in \mathcal{U}:|\bfrho(z,w)|<R\}\big| ~=~ \big|\{z\in \mathcal{U}:|\bfrho(z, h(w))|<R\}\big|
\end{equation}
for any $w\in \calU$ and any $h\in\mathbb{H}^{n-1}$.
Indeed, by \eqref{eqn:h-inv},
\begin{align*}
\int\limits_{\substack{|\bfrho(z,w)|<R,\\ \ImPt z_{n} > |z^{\prime}|^{2}}}  dV(z)
~=~ \int\limits_{\substack{|\bfrho(h(z),h(w))|<R,\\ \ImPt z_{n} > |z^{\prime}|^{2}}} dV(z)
~=~ \int\limits_{\substack{|\bfrho(z,h(w))|<R,\\ \ImPt z_{n} > |z^{\prime}|^{2}}} dV(z),
\end{align*}
where the last equality follows by the change of variables $z\mapsto h(z)$ in the integral.

Now let $h=[-w^{\prime},0]$, then $h(w)=(0^{\prime}, w_{n}-i|w^{\prime}|^{2})$ and
\[
\bfrho(z,h(w))=\frac{i}{2} \left(\overline{w}_{n}+i|w'|^{2}-z_{n} \right).
\]
Also, the inequalities $\left|z_{n}-\overline{w}_{n}-i|w^{\prime}|^{2}\right| < 2R$ and $\ImPt z_{n} > |z^{\prime}|^{2}$
imply that $|z^{\prime}|^2 \leq 2R$. Hence
\begin{align*}
\big|\{z\in \calU: |\bfrho(z, h(w))|<R \}\big| ~=~&
\int\limits_{\substack{\left|z_{n}-\overline{w}_{n}-i|w^{\prime}|^{2}\right|< 2R,\\ \ImPt z_{n} > |z^{\prime}|^{2}}} dV(z)\\
\leq~& \int\limits_{\substack{\left|z_{n}-\overline{w}_{n}-i|w^{\prime}|^{2}\right|< 2R,\\ |z^{\prime}| \leq \sqrt{2R}}} dV(z)\\
~=~& \pi (2R)^2 \cdot \frac {\pi^{n-1}}{(n-1)!} (2R)^{n-1}
~=~ \frac {2^{n+1} \pi^n}{(n-1)!} R^{n+1}.
\end{align*}
Together with \eqref{eqn:equi-volume1}, this completes the proof.
\end{proof}

For $0 < p < \infty$, the weak-$L^p$ space $L^{p,\infty}(\calU)$ is defined as the set of
all measurable functions $f$ such that
\[
\| f \|_{L^{p,\infty}} := \sup_{\lambda>0} \, \lambda\cdot |\{z\in \calU: |f(z)|>\lambda\}|^{1/p}
\]
is finite.

We need the following variant of Schur's test, which is a special case of Lemma 1.11.17 in \cite[p.181]{Tao10}.

\begin{lem}[Weak-type Schur's test] \label{lem:variantofschur}
Let $1<p<q<\infty$ and $1<r<\infty$ be such that $\frac{1}{q}=\frac{1}{p}+\frac{1}{r}-1$.
Suppose that $Q(z,w)$ is a measurable function on $\calU \times \calU$ and $T$
is the associated integral operator
\[
T f(z) = \int\limits_{\calU} Q(z,w)f(w)\ dV(w), \quad z\in \calU.
\]
If there exists a positive constant $C$ such that
\[
\|Q(\cdot,w)\|_{L^{r,\infty}} \leq C
\]
for almost every $w\in \calU$ and
\[
\|Q(z,\cdot)\|_{L^{r,\infty}}\leq C
\]
for almost every $z\in \calU$, then $T$ is bounded from $L^{p}(\calU)$ to $L^{q}(\calU)$.
\end{lem}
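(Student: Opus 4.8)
The plan is to derive the boundedness from a truncation of the kernel together with the classical Schur test at two endpoint exponents, and then to pass from a weak-type estimate to the desired strong-type one by interpolation.

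As a preliminary device I would record a strong-type Schur test at a single integrability exponent: if a measurable kernel $K$ on $\calU\times\calU$ satisfies $\sup_{w}\|K(\cdot,w)\|_{L^{s}}\le B$ and $\sup_{z}\|K(z,\cdot)\|_{L^{s}}\le B$ for some $1\le s<\infty$, then the associated integral operator maps $L^{a}(\calU)$ into $L^{b}(\calU)$ with norm at most $B$ whenever $\frac{1}{b}=\frac{1}{a}+\frac{1}{s}-1\ge 0$. This follows by Riesz--Thorin interpolation between the two elementary bounds $L^{1}\to L^{s}$ (Minkowski's integral inequality, using the first hypothesis) and $L^{s'}\to L^{\infty}$ (H\"older's inequality, using the second), whose interpolation line is precisely $\frac{1}{b}=\frac{1}{a}+\frac{1}{s}-1$.

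Next, fix any admissible pair $(a,b)$ with $\frac{1}{b}=\frac{1}{a}+\frac{1}{r}-1$ and $1<a,b<\infty$, and split the kernel at height $M>0$ as $Q=Q\mathbf{1}_{\{|Q|>M\}}+Q\mathbf{1}_{\{|Q|\le M\}}$. Picking auxiliary exponents $r_{0}<r<r_{1}$ (with $r_{1}$ close enough to $r$), the hypotheses $\|Q(\cdot,w)\|_{L^{r,\infty}}\le C$ and $\|Q(z,\cdot)\|_{L^{r,\infty}}\le C$ give, uniformly in $z$ and in $w$, the standard truncation bounds $\|Q\mathbf{1}_{\{|Q|>M\}}\|_{L^{r_{0}}}\lesssim C^{r/r_{0}}M^{1-r/r_{0}}$ and $\|Q\mathbf{1}_{\{|Q|\le M\}}\|_{L^{r_{1}}}\lesssim C^{r/r_{1}}M^{1-r/r_{1}}$. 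Applying the single-exponent Schur test to each piece produces $L^{a}\to L^{b_{0}}$ and $L^{a}\to L^{b_{1}}$ bounds, where $\frac{1}{b_{i}}=\frac{1}{a}+\frac{1}{r_{i}}-1$ and $b_{0}<b<b_{1}$, carrying explicit powers of $M$. Splitting $\{|Tf|>\lambda\}$ across the two pieces, estimating each term by Chebyshev's inequality, and then optimizing the free parameter $M=M(\lambda)$ yields the weak-type inequality $T\colon L^{a}\to L^{b,\infty}$, valid for every admissible $(a,b)$.

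Finally, for the target exponents the relation $\frac{1}{q}=\frac{1}{p}+\frac{1}{r}-1$ together with $1<p$ and $q<\infty$ forces $1<p<r'$, so $(p,q)$ is an interior point of the admissible segment $1<a<r'$. Choosing two admissible pairs $(p_{0},q_{0})$ and $(p_{1},q_{1})$ with $p_{0}<p<p_{1}$ and applying the Marcinkiewicz interpolation theorem to the corresponding weak-type bounds upgrades them to the strong-type estimate $T\colon L^{p}\to L^{q}$. I expect the main obstacle to be the middle step: arranging that the truncated kernels obey the uniform $L^{r_{i}}$ bounds \emph{simultaneously} in both variables, so that the single-exponent Schur test applies to each piece, and then carrying out the optimization in $M$ that extracts the sharp weak-type exponent. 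Once this weak-type estimate is available along the whole admissible line, the concluding Marcinkiewicz interpolation is routine.
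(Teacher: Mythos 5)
Your argument is correct: truncating the kernel at height $M$, applying the single-exponent (strong) Schur test to the two pieces, optimizing $M=M(\lambda)$ to get the weak-type bound $L^{a}\to L^{b,\infty}$ along the admissible line, and finishing with Marcinkiewicz interpolation is precisely the standard proof of this generalized Young--Schur inequality, and all the exponent checks you flag ($r_{0}<r<r_{1}$ with $r_{1}$ close enough to $r$ so that $\frac{1}{a}+\frac{1}{r_{1}}-1>0$, and the uniformity of the truncated $L^{r_{i}}$ bounds in both variables) go through. The paper offers no proof of this lemma --- it quotes it as a special case of Lemma 1.11.17 in Tao's book --- and your argument is essentially a self-contained reconstruction of the proof given in that reference, so the two approaches coincide.
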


We denote by $H^{\infty}(\calU)$ the space of bounded holomorphic functions on $\calU$,
and by $L_a^2(\calU)$ the closed subspace of $L^2(\calU)$ consisting of
holomorphic functions on $\calU$. The orthogonal projection from $L^2(\calU)$ onto $L_a^2(\calU)$,
known as the Bergman projection, can be expressed as an integral operator:
\[
P f(z) = \int\limits_{\calU} f(w) K(z,w) dV(w),
\]
with the Bergman kernel
\begin{equation}\label{eqn:bergknl}
K(z,w) ~:=~ \frac {n!}{4\pi^{n}}\, \frac {1}{\bfrho(z,w)^{n+1}}, \quad (z,w)\in \calU\times \calU.
\end{equation}
For $z\in \calU$, we put $K_z(\cdot):= K(\cdot,z)$ and $k_z:=K_z/\|K_z\|_2$.
The Berezin transform on $\calU$ is given by
\[
\mathcal{B} f(z) ~:=~ \langle fk_{z},k_{z}\rangle
~=~ \frac {n!}{4\pi^{n}} \int\limits_{\calU} f(w) \frac{\bfrho(z)^{n+1}}{|\bfrho(z,w)|^{2n+2}}\ dV(w), \quad z\in \calU,
\]
where $\bfrho(z):=\bfrho(z,z)= \ImPt z_n - |z^{\prime}|^2$.

\begin{lem}\label{lem:berezin}
If $f\in H^{\infty}(\calU)$ then $\calB  f = f$.
\end{lem}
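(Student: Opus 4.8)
The plan is to deduce this from the reproducing property of the Bergman kernel, which is available once we recall that $P$ is the orthogonal projection of $L^2(\calU)$ onto $L_a^2(\calU)$. Writing the inner product as $\langle g,h\rangle=\int_{\calU} g\,\overline{h}\,dV$ and using $\overline{\bfrho(w,z)}=\bfrho(z,w)$, one has $\overline{K_z(w)}=\overline{K(w,z)}=K(z,w)$, so for every $g\in L_a^2(\calU)$ the projection formula gives the reproducing identity
\[
\langle g, K_z\rangle = \int\limits_{\calU} g(w)\,K(z,w)\,dV(w) = Pg(z) = g(z).
\]
In particular, taking $g=K_z$ yields $\|K_z\|_2^2 = \langle K_z,K_z\rangle = K_z(z)=K(z,z)$.

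The crucial observation is that multiplication by a bounded holomorphic function preserves $L_a^2(\calU)$. Fix $z\in\calU$. Since $K_z$ is holomorphic on $\calU$ and $f\in H^{\infty}(\calU)$, the product $fK_z$ is holomorphic, and
\[
\|fK_z\|_2 \le \|f\|_{\infty}\,\|K_z\|_2 < \infty,
\]
so $fK_z\in L_a^2(\calU)$. Applying the reproducing identity above to $g=fK_z$ then gives
\[
\langle fK_z, K_z\rangle = (fK_z)(z) = f(z)\,K_z(z) = f(z)\,K(z,z).
\]

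It remains to unwind the definition of the Berezin transform. Because $k_z=K_z/\|K_z\|_2$ and $\|K_z\|_2^2=K(z,z)$,
\[
\calB f(z) = \langle f k_z, k_z\rangle = \frac{\langle fK_z, K_z\rangle}{\|K_z\|_2^2} = \frac{f(z)\,K(z,z)}{K(z,z)} = f(z),
\]
as claimed. I do not expect a genuine obstacle here: the entire argument rests on the reproducing property, and the only step that requires care is checking that $fK_z$ still lies in $L_a^2(\calU)$ so that this property may legitimately be applied, which is immediate from the boundedness of $f$.
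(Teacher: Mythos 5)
Your proof is correct and follows essentially the same route as the paper: both rest on the observation that $fK_z\in L_a^2(\calU)$ for bounded holomorphic $f$, followed by an application of the reproducing property of the Bergman kernel. You merely spell out the verification that $\|K_z\|_2^2=K(z,z)$ and the normalization step, which the paper leaves implicit.
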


\begin{proof}
Since $f\in H^{\infty}(\calU)$, for each fixed $z\in \calU$, $fk_z\in L_a^2(\calU)$. By the reproducing property of $K_z$,
\[
\calB f(z) ~=~ \frac {1}{\sqrt{K(z,z)}} \langle fk_{z},K_{z}\rangle ~=~ \frac{1}{\sqrt{K(z,z)}} f(z) k_{z}(z) ~=~ f(z).
\]
\end{proof}

We end this section by recalling two formulas from \cite{LLHZ17}.

\begin{lem}[{\cite[Key Lemma]{LLHZ17}}]\label{lem:keylemma}
Suppose that $r,\,s>0$, $t>-1$ and $r+s-t>n+1$. Then
\begin{equation}\label{eqn:keylem2}
\int\limits_{\calU}  \frac {\bfrho(w)^{t}} {\bfrho(z,w)^{r} \bfrho(w,u)^{s}} dV(w)
~=~ \frac {C_1(r,s,t)} {\bfrho(z,u)^{r+s-t-n-1}}
\end{equation}
holds for all $z, u\in \calU$, where
\begin{equation}\label{eqn:const}
C_1(r,s,t) ~:=~  \frac {4\pi^{n} \Gamma(1+t)\Gamma(r+s-t-n-1)}{\Gamma(r)
\Gamma(s)}.
\end{equation}
\end{lem}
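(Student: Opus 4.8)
The plan is to exploit the fact that both sides of \eqref{eqn:keylem2} are holomorphic in $z$ and anti-holomorphic in $u$, which reduces the identity to its restriction to the diagonal $u=z$, and then to use the affine automorphisms of $\calU$ to reduce that diagonal identity to a single explicit integral.

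First I would record that $\bfrho(z,w)$ is holomorphic in $z$ and anti-holomorphic in $w$, and that $\RePt\bfrho(z,u)>0$ for all $z,u\in\calU$: writing $z_n=x_1+iy_1$, $u_n=x_2+iy_2$, one has $2\RePt\bfrho(z,u)=(y_1+y_2)-2\RePt\langle z^{\prime},u^{\prime}\rangle\ge (y_1+y_2)-(|z^{\prime}|^2+|u^{\prime}|^2)>0$. Hence, with $\gamma:=r+s-t-n-1$, the principal branch of $\bfrho(z,u)^{-\gamma}$ is well defined and is holomorphic in $z$, anti-holomorphic in $u$. Denoting the left-hand side of \eqref{eqn:keylem2} by $I(z,u)$, the integrand is holomorphic in $z$ and anti-holomorphic in $u$ for each fixed $w$, and the integral converges absolutely and locally uniformly (this is routine from the estimates of the last step); differentiating under the integral sign shows $I$ enjoys the same holomorphy. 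Consequently $G(z,u):=I(z,u)-C_1(r,s,t)\,\bfrho(z,u)^{-\gamma}$ is holomorphic in $z$ and anti-holomorphic in $u$, so $(z,\zeta)\mapsto G(z,\bar\zeta)$ is holomorphic on $\calU\times\overline{\calU}$. Since it would vanish on the totally real diagonal $\{(z,\bar z):z\in\calU\}$, a uniqueness set for holomorphic functions, it must vanish identically; thus it suffices to prove \eqref{eqn:keylem2} on the diagonal $u=z$.

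Next, on the diagonal I would use the transitive action on $\calU$ of the group generated by the Heisenberg translations \eqref{eqn:groupaction} and the non-isotropic dilations $\delta_\lambda(z^{\prime},z_n)=(\lambda z^{\prime},\lambda^2 z_n)$, $\lambda>0$. The measure $dV$ is invariant under \eqref{eqn:groupaction} and scales by $\lambda^{2n+2}$ under $\delta_\lambda$, while $\bfrho(z,w)$ is invariant under \eqref{eqn:groupaction} by \eqref{eqn:h-inv} and scales by $\lambda^2$ under $\delta_\lambda$. A change of variables then shows that both $I(z,z)$ and $\bfrho(z)^{-\gamma}$ are invariant under the Heisenberg action and scale by $\lambda^{-2\gamma}$ under $\delta_\lambda$. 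Because the group acts transitively, every $z\in\calU$ is the image of the base point $(0^{\prime},i)$, so it is enough to verify the identity there, i.e. to show $I\big((0^{\prime},i),(0^{\prime},i)\big)=C_1(r,s,t)$ (note $\bfrho\big((0^{\prime},i)\big)=1$).

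Finally I would compute this single integral. Since $\bfrho\big((0^{\prime},i),w\big)=\tfrac12(1+i\bar w_n)$ and $\bfrho\big(w,(0^{\prime},i)\big)=\tfrac12(1-iw_n)$, the task is to evaluate
\[
2^{r+s}\int_{\calU}\frac{(\ImPt w_n-|w^{\prime}|^2)^t}{(1+i\bar w_n)^r(1-iw_n)^s}\,dV(w).
\]
I would integrate $w^{\prime}$ over the ball $\{|w^{\prime}|^2<\ImPt w_n\}$ using $\int_{|v|<1}(1-|v|^2)^t\,dV(v)=\pi^{n-1}\Gamma(t+1)/\Gamma(t+n)$; then, writing $w_n=x+iy$ so that $1+i\bar w_n=(1+y)+ix$ and $1-iw_n=(1+y)-ix$, integrate in $x$ by means of the Beta-type identity $\int_{\bbR}(a+ix)^{-r}(a-ix)^{-s}\,dx=2\pi\Gamma(r+s-1)\big/\big((2a)^{r+s-1}\Gamma(r)\Gamma(s)\big)$ with $a=1+y$ (which itself follows from $(a+ix)^{-r}=\Gamma(r)^{-1}\int_0^\infty\tau^{r-1}e^{-(a+ix)\tau}\,d\tau$ and Fubini); and finally integrate in $y$ via $\int_0^\infty y^{t+n-1}(1+y)^{-(r+s-1)}\,dy=B(t+n,\,r+s-t-n-1)$. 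Collecting the Gamma factors yields exactly $C_1(r,s,t)$. The main obstacle, and the place where the hypotheses enter decisively, is this last convergence: the $y$-integral is finite precisely when $r+s-t-n-1>0$, i.e. $r+s-t>n+1$; here $t>-1$ guarantees both $t+n>0$ and convergence of the $w^{\prime}$-integral, and $r,s>0$ make the branches and the factor $\Gamma(r)\Gamma(s)$ meaningful, while the hypotheses force $r+s>n\ge 1$ so that the $x$-integral converges. The only other point requiring genuine care is the justification of the holomorphy-and-uniqueness reduction—absolute, locally uniform convergence of $I(z,u)$ together with the identity principle on the totally real diagonal—after which the computation is routine.
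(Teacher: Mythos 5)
The paper itself offers no proof of this lemma; it is imported wholesale from \cite{LLHZ17}, so there is no internal argument to measure yours against. Judged on its own, your proof is essentially correct and self-contained. The polarization step is sound: $\RePt\bfrho(z,u)>0$ makes the principal powers well defined, $I(z,u)$ and $\bfrho(z,u)^{-\gamma}$ are holomorphic in $z$ and anti-holomorphic in $u$, and the diagonal $\{(z,\bar z)\}$ is a maximal totally real submanifold, hence a uniqueness set. The reduction to the base point $\bfi=(0^{\prime},i)$ is also correct, since $\bfrho$ is invariant under the Heisenberg translations and homogeneous of degree $2$ under $\delta_\lambda$ while $dV$ scales by $\lambda^{2n+2}$, and these maps act transitively on $\calU$. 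The final bookkeeping checks out: $2^{r+s}\cdot\frac{\pi^{n-1}\Gamma(t+1)}{\Gamma(t+n)}\cdot\frac{2\pi\,\Gamma(r+s-1)}{2^{r+s-1}\Gamma(r)\Gamma(s)}\cdot B(t+n,\,r+s-t-n-1)=C_1(r,s,t)$, with each stage converging exactly under $t>-1$, $r+s>1$ and $r+s-t>n+1$.

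Two places need tightening. First, before you may differentiate under the integral sign and apply the identity principle, you need absolute and locally uniform convergence of $I(z,u)$ on all of $\calU\times\calU$, not merely on the diagonal; ``routine from the estimates of the last step'' does not deliver this, because the last step only treats the single point $z=u=\bfi$. A clean fix is Young's inequality, $|\bfrho(z,w)|^{-r}|\bfrho(w,u)|^{-s}\le \tfrac{r}{r+s}|\bfrho(z,w)|^{-(r+s)}+\tfrac{s}{r+s}|\bfrho(w,u)|^{-(r+s)}$, after which each term integrated against $\bfrho(w)^{t}\,dV(w)$ is finite and locally bounded by Lemma \ref{lem:keylem2}, precisely under the stated hypotheses. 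Second, the one-variable identity $\int_{\bbR}(a+ix)^{-r}(a-ix)^{-s}\,dx=2\pi\Gamma(r+s-1)/\bigl((2a)^{r+s-1}\Gamma(r)\Gamma(s)\bigr)$ does not follow from naive Fubini: the inner integral $\int_{\bbR}e^{-ix(\tau-\sigma)}\,dx$ is not absolutely convergent. Use Parseval's identity for the Fourier transforms of $\tau^{r-1}e^{-a\tau}\chi_{(0,\infty)}(\tau)$ (valid for $r,s>\tfrac12$, then extend to all $r,s>0$ with $r+s>1$ by analytic continuation in $r$ and $s$), or simply cite the classical Beta-type formula. Both repairs are short, so the argument stands as a genuine alternative to having to consult \cite{LLHZ17}.
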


\begin{lem}[{\cite[Lemma 5]{LLHZ17}}]\label{lem:keylem2}
Let $s,t\in \bbR$. Then we have
\begin{equation}\label{eqn:keylem}
\int\limits_{\calU} \frac {\bfrho(w)^{t}} {|\bfrho(z,w)|^{s}} dV(w) ~=~
\begin{cases}
\dfrac {C_2(s,t)} {\bfrho(z)^{s-t-n-1}}, &
\text{ if } t>-1 \text{ and } s-t>n+1\\[12pt]
+\infty, &  otherwise
\end{cases}
\end{equation}
for all $z\in \calU$, where
\[
C_2(s,t):=\frac {4 \pi^{n} \Gamma(1+t) \Gamma(s-t-n-1)} {\Gamma^2\left(s/2\right)}.
\]
\end{lem}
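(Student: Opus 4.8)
The plan is to use the symmetries of $\calU$ to collapse the evaluation to a single explicit Euclidean integral. Write $I(z)$ for the left-hand side of \eqref{eqn:keylem}; since the integrand is nonnegative, Tonelli's theorem applies throughout and $I(z)$ is well defined with values in $[0,+\infty]$. First I would establish the Heisenberg invariance of $I$. The change of variables $w\mapsto h(w)$ for $h\in\bbH^{n-1}$ is volume preserving (the affine map \eqref{eqn:groupaction} is lower triangular with unit complex Jacobian), it fixes $\bfrho(w)=\bfrho(h(w),h(w))$, and by \eqref{eqn:h-inv} it satisfies $\bfrho(z,h(w))=\bfrho(h^{-1}(z),w)$. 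Hence $I(z)=I(h^{-1}(z))$. Taking $h=[z',\RePt z_n]$ gives $h^{-1}(z)=(0',i\bfrho(z))$, so $I(z)$ depends only on $c:=\bfrho(z)$, and it suffices to evaluate $I$ at the points $(0',ic)$.

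Next I would extract the power of $c$ using the nonisotropic dilations $\delta_r(z',z_n):=(rz',r^2z_n)$, $r>0$, which map $\calU$ bijectively onto itself with real Jacobian $r^{2(n+1)}$ and satisfy $\bfrho(\delta_r w)=r^2\bfrho(w)$ and $\bfrho(\delta_r z,\delta_r w)=r^2\bfrho(z,w)$. Substituting $w=\delta_{\sqrt c}(v)$ and using $(0',ic)=\delta_{\sqrt c}(0',i)$ yields $I((0',ic))=c^{\,n+1+t-s}\,I((0',i))$, that is,
\[
I(z)=\frac{I_0}{\bfrho(z)^{\,s-t-n-1}},\qquad I_0:=I((0',i)).
\]
This already produces the correct power of $\bfrho(z)$, so it remains only to compute the constant $I_0$ and to pin down the exact range of convergence.

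Finally I would evaluate $I_0$ directly. At $z=(0',i)$ the kernel collapses to $\bfrho((0',i),w)=\tfrac12(1+i\bar w_n)$, independent of $w'$; writing $w_n=x+iy$ and $u:=y-|w'|^2=\bfrho(w)\ge0$, one finds $|\bfrho((0',i),w)|^2=\tfrac14\big[(1+u+|w'|^2)^2+x^2\big]$ and
\[
I_0=2^{s}\int_{\bbC^{n-1}}\int_{\bbR}\int_0^\infty
\frac{u^{t}}{\big[(1+u+|w'|^2)^2+x^2\big]^{s/2}}\,du\,dx\,dV(w').
\]
I would integrate in three successive groups of variables, each a Beta integral: the $x$-integral (finite iff $s>1$), then the radial $|w'|^2$-integral after passing to $\tau=|w'|^2$ via $\int_{\bbC^{n-1}}g(|w'|^2)\,dV(w')=\tfrac{\pi^{n-1}}{(n-2)!}\int_0^\infty g(\tau)\tau^{n-2}\,d\tau$ (finite iff $s>n$, for $n\ge2$), and finally the $u$-integral, which is finite precisely when $t>-1$ and $s-t>n+1$. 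Since those last two conditions force $s>n>1$, they are exactly the conditions guaranteeing convergence at every stage; if either fails, nonnegativity forces $I_0=+\infty$, whence $I(z)=+\infty$ for all $z$. Collecting the Gamma factors and applying the Legendre duplication formula $\Gamma(s-1)=2^{s-2}\pi^{-1/2}\Gamma(\tfrac{s-1}{2})\Gamma(\tfrac{s}{2})$ makes the $\Gamma(s-n)$ and $\Gamma(\tfrac{s-1}{2})$ factors cancel and yields $I_0=C_2(s,t)$. The case $n=1$ (the half-plane) is the same computation with the $\tau$-integral absent.

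The main obstacle is the bookkeeping in this last step: carrying out the three nested Beta integrals in an order for which each converges, correctly recording the convergence constraint contributed by each factor so as to recover exactly the dichotomy in \eqref{eqn:keylem}, and verifying that the accumulated Gamma functions collapse—through the duplication formula—to the stated constant $C_2(s,t)$.
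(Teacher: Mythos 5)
Your proposal is correct, but there is nothing in the paper to compare it against: the paper quotes this lemma verbatim from \cite[Lemma 5]{LLHZ17} and reproduces no proof, so your argument should be judged as a self-contained substitute, and it holds up. I checked the three pillars. The symmetry reduction is sound: the affine maps \eqref{eqn:groupaction} are unimodular, fix $\bfrho(w)$, and satisfy $\bfrho(z,h(w))=\bfrho(h^{-1}(z),w)$ by \eqref{eqn:h-inv}, and with $h=[z',\RePt z_n]$ one indeed gets $h^{-1}(z)=(0',i\bfrho(z))$; the dilation identity $\bfrho(\delta_r z,\delta_r w)=r^2\bfrho(z,w)$ with Jacobian $r^{2(n+1)}$ gives exactly the exponent $s-t-n-1$. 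The explicit evaluation also checks out: $\bfrho((0',i),w)=\tfrac12(1+i\bar w_n)$, so $|\bfrho|^2=\tfrac14\bigl[(1+u+|w'|^2)^2+x^2\bigr]$, and the three Beta integrals give
\[
I_0 \;=\; 2^{s}\,\sqrt{\pi}\,\frac{\Gamma\bigl(\tfrac{s-1}{2}\bigr)}{\Gamma\bigl(\tfrac{s}{2}\bigr)}\cdot
\frac{\pi^{n-1}\,\Gamma(s-n)}{\Gamma(s-1)}\cdot
\frac{\Gamma(t+1)\,\Gamma(s-t-n-1)}{\Gamma(s-n)}
\;=\;4\pi^{n}\,\frac{\Gamma(t+1)\Gamma(s-t-n-1)}{\Gamma^{2}(s/2)},
\]
where the last step is the duplication formula \eqref{eqn:Legendre} applied to $\Gamma(s-1)$; this is $C_2(s,t)$. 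The divergence dichotomy is also handled correctly: by Tonelli the full integral equals the iterated one in your chosen order ($x$, then $\tau$, then $u$), the stages require $s>1$, $s>n$, and $t>-1$, $s-t>n+1$ respectively, and the last pair implies the first two, so failure of the stated conditions forces $I_0=+\infty$ and hence $I(z)\equiv+\infty$. Two cosmetic points: the parenthetical ``those conditions force $s>n>1$'' is wrong as written when $n=1$ (though harmless, since you treat $n=1$ separately and $s>n+1+t>1$ still holds there), and the $w'$-polar formula with the factor $\pi^{n-1}/(n-2)!$ should be explicitly restricted to $n\ge 2$. Your method is, incidentally, very much in the spirit of the paper itself, which uses the same Heisenberg translations in the proof of Lemma \ref{lem:volume} and the same nonisotropic dilations in the proof of Lemma \ref{lem:necty1}.
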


\section{Proof of Theorem \ref{thm:main}: Part (i)}

We begin with the following lemma.

\begin{lem}\label{lem:necty1}
If  $T_{\alpha}:L^{p}(\mathcal{U})\to L^{q}(\mathcal{U})$ is bounded, then $p$ and $q$
must be related by
\begin{equation}\label{eqn:necty1}
\frac{1}{q} = \frac{1}{p} + \frac{\alpha}{n+1} -1.
\end{equation}
\end{lem}

\begin{proof}

Suppose that $T_{\alpha}:L^{p}(\mathcal{U})\to L^{q}(\mathcal{U})$ is bounded, that is,
there is a positive constant $C=C(p,q,n,\alpha)$ such that
\begin{equation}\label{eqn:pqbddnss}
\|T_{\alpha}f \|_q \leq C \|f\|_p
\end{equation}
for all $f\in L^p(\calU)$.

Fix a function $f\in L^p(\calU)$, say $f(z)= |\bfrho(z,\bfi)|^{-n-2}$, where $\bfi=(0^{\prime}, i)$. For $t>0$, we define the dilation $t\circ z$ by
\[
t\circ z = (tz^{\prime}, t^2 z_n), \quad z = (z^{\prime}, z_n) \in \calU.
\]
It is obvious that the dilations map $\calU$ to $\calU$.
Now we consider the dilations $\bfdelta^{t} (f)$ of $f$ given by
\[
\bfdelta^{t} (f)(z) := f(t\circ z), \quad z\in \calU.
\]
It is easy to verify that
\begin{equation}\label{eqn:p-norm-of-dilation}
\|\bfdelta^{t} (f) \|_p = t^{- \frac {2(n+1)}{p}} \|f\|_p.
\end{equation}
Note that $\bfrho(t\circ z, t\circ w) = t^{2}\bfrho(z,w)$ holds for any $z,w\in \calU$ and any $t>0$. Making the change of variables
$u = t\circ w$ in the integral defining $T_{\alpha} (\bfdelta^t (f))$, we see that
\begin{align*}
T_{\alpha} (\bfdelta^t (f)) (z) ~=~& \int\limits_{\calU} \frac {f(u)} {t^{-2\alpha} \bfrho(t\circ z, u)^{\alpha}} t^{-2(n+1)}\ dV(u)\\
=~& t^{2(\alpha-n-1)} \bfdelta^t (T_{\alpha} f)(z),
\end{align*}
and hence
\begin{equation}\label{eqn:q-norm-of-dilation}
\| T_{\alpha} (\bfdelta^t (f) )\|_q  = t^{2(\alpha-n-1 - \frac {n+1}{q}) }  \| T_{\alpha} f \|_q.
\end{equation}
Replacing $f$ by $\bfdelta^{t} (f)$ in \eqref{eqn:pqbddnss} and using \eqref{eqn:p-norm-of-dilation}
and \eqref{eqn:q-norm-of-dilation}, we obtain
\begin{equation}\label{eqn:rescpqbddnss}
t^{2(\alpha-n-1 - \frac {n+1}{q}) } \|T_{\alpha}f\|_q ~\leq~ C\,  t^{-\frac {2(n+1)}{p} } \|f\|_p.
\end{equation}
Suppose now that $\frac{1}{q} < \frac{1}{p} + \frac{\alpha}{n+1} -1$. We can write \eqref{eqn:rescpqbddnss} as
\[
\|T_{\alpha}f\|_q ~\leq~ C\,  t^{2(n+1)(\frac {1}{q} - \frac {1}{p} - \frac {\alpha}{n+1} +1)} \|f\|_p
\]
and let $t\to \infty$ to obtain that $T_{\alpha} f =0$, a contradiction. Similarly,
if $\frac{1}{q} > \frac{1}{p} + \frac{\alpha}{n+1} -1$, we could write \eqref{eqn:rescpqbddnss} as
\[
t^{2(n+1)(\frac {1}{p} + \frac {\alpha}{n+1}-1-\frac {1}{q} )} \|T_{\alpha}f\|_q
~\leq~ C  \|f\|_p
\]
and let $t\to 0$ to obtain that $\| f\|_p = \infty$, again a contradiction. It follows that
\eqref{eqn:necty1} must necessarily hold.
\end{proof}

Now we turn to the proof of Part (i) of Theorem \ref{thm:main}.

We argue by contradiction. Suppose that $\alpha > n+1$ and $T_{\alpha}:L^{p}(\mathcal{U})\to L^{q}(\mathcal{U})$ is bounded.
By Lemma \ref{lem:necty1}, $p$ and $q$
must be related by
\begin{equation}\label{eqn:necty2}
\frac{1}{q} = \frac{1}{p} + \frac{\alpha}{n+1} -1.
\end{equation}

Let $N$ be a positive integer such that $N > n+1$ and consider the function
\[
f_N (z) ~:=~ \bfrho(z,\mathbf{i})^{-N}, \quad z\in\mathcal{U}.
\]
By Lemma \ref{lem:keylem2}, we see that $f_N\in L^{p}(\mathcal{U})$ and
\begin{equation*}
\| f_N \|_p ~=~ \left\{ \frac {4\pi^n \Gamma(pN-n-1)} {\Gamma^2 ( pN/2) } \right\}^{1/p}.
\end{equation*}
Using Legendre's duplication formula (see \cite[p.26, (2.3.1)]{BW10})
\begin{equation}\label{eqn:Legendre}
\Gamma(2x) = \frac {2^{2x-1}}{\sqrt{\pi}} \Gamma(x)\Gamma\left(x+\frac {1}{2} \right),
\quad x\neq 0, -1, -2, \ldots
\end{equation}
and the asymptotic formula for the Gamma function (\cite[p.22, (2.1.9)]{BW10})
\begin{equation}\label{eqn:Stirling}
\frac {\Gamma(x+a)}{\Gamma(x)} = x^a \left[ 1+ O\left(x^{-1}\right)\right], \quad \text{as } x\to +\infty,
\end{equation}
we see that
\begin{equation}
\| f_N \|_p ~\sim~ 2^N \cdot N^{-\frac {n}{p} - \frac {1}{2p}}, \quad \text{as } N\to \infty.
\end{equation}
Also, by Lemma \ref{lem:keylemma}, we have
\begin{align*}
(T_{\alpha} f_N)(z) ~=~ &\int\limits_{\mathcal{U}} \frac{ dV(w)} {\bfrho(z,w)^{\alpha}\bfrho(w,\mathbf{i})^{N}}\\\
=~ & \frac {4\pi^n \Gamma(N+\alpha-n-1)} {\Gamma(N)\Gamma(\alpha)} \,  \bfrho(z,\bfi)^{n+1-\alpha-N}.
\end{align*}
Thus, by Lemma \ref{lem:keylem2}, we obtain
\begin{equation*}
\|T_{\alpha}f_N \|_q ~=~ \frac {4\pi^n \Gamma(N+\alpha-n-1)} {\Gamma(N)\Gamma(\alpha)}
\, \left\{ \frac {4\pi^n \Gamma(q(N+\alpha-n-1)-n-1) }{ \Gamma^2(q(N+\alpha-n-1)/2)}\right\}^{1/q}.
\end{equation*}
Again, by \eqref{eqn:Legendre} and \eqref{eqn:Stirling}, we get
\begin{equation}
\| T_{\alpha} f_N \|_p ~\sim~ 2^N \cdot N^{-\frac {n}{q} - \frac {1}{2q} + \alpha - n-1}, \quad \text{as } N\to \infty.
\end{equation}
Since there exists a positive constant $C$ such that $\|T_{\alpha} f_N\|_q \leq C \|f_N \|_p$ for all $N > n+1$,
we can find another positive constant $C^{\prime}$, independent of $N$, such that
\begin{equation}\label{asym-est1}
2^N \cdot N^{-\frac {n}{q} - \frac {1}{2q} + \alpha - n-1} ~\leq~ C^{\prime} \cdot 2^N \cdot N^{-\frac {n}{p} - \frac {1}{2p}}
\end{equation}
for all $N>n+1$. Keeping \eqref{eqn:necty2} in mind, we see that
\[
-\frac {n}{q} - \frac {1}{2q} + \alpha - n-1 + \frac {n}{p} + \frac {1}{2p} ~=~ \frac {1}{2} \left( \frac {\alpha}{n+1} -1\right),
\]
and hence \eqref{asym-est1} can be rewritten as
\[
N^{\frac {1}{2}(\frac{\alpha}{n+1} -1)} ~\leq~ C^{\prime}
\]
for all $N>n+1$. But this is impossible, since $\alpha>n+1$.

\section{Proof of Theorem \ref{thm:main}: Part (ii)}

\subsection{Necessity}

As we have already shown in Lemma \ref{lem:necty1}, if $T_{\alpha}$ is bounded from $L^{p}(\mathcal{U})$ to $L^{q}(\mathcal{U})$
then $p$ and $q$ satisfy
\begin{equation*}
\frac{1}{q} = \frac{1}{p} + \frac{\alpha}{n+1} -1.
\end{equation*}
It remains to show that $T_{\alpha}$ is unbounded in the endpoint cases $(p,q)=(1,\frac {n+1}{\alpha})$ and
$(p,q)=(\frac {n+1}{n+1-\alpha}, \infty)$. We only consider the former case,
since, once this case is done, the other case follows by duality.

Consider the function
\[
f(z) ~:=~ \frac{1} {|\rho(z,\bfi)|^{2n+2}}, \quad z\in \calU.
\]
Then $f\in L^{1}(\calU)$, by Lemma \ref{lem:keylem2}.

It is clear that, for any fixed $z\in \calU$, the function $g_z(\cdot):=\bfrho(\cdot, z)^{-\alpha} \in H^{\infty}$.
Hence $\calB g_z = g_z$, in view of Lemma \ref{lem:berezin}. In particular, $\calB g_z(\bfi) = g_z(\bfi)$, that is,
\begin{equation}\label{eqn:bfequalsf}
\frac {n!}{4\pi^n} \int\limits_{\calU} \bfrho(w, z)^{-\alpha} \frac{\bfrho(\bfi,\bfi)^{n+1}}
{|\bfrho(\bfi,w)|^{2n+2}}\ dV(w) = \bfrho(\bfi, z)^{-\alpha}.
\end{equation}
Taking the complex conjugate of both sides of \eqref{eqn:bfequalsf}, we obtain
\[
(T_{\alpha} f)(z) ~=~ \frac {4\pi^n}{n!}\, \overline {\calB g_z(\bfi)} ~=~ \frac {4\pi^n}{n!}\, \overline {g_z(\bfi)}
~=~ \frac {4\pi^n}{n!}\, \bfrho(z,\bfi)^{-\alpha}, \quad z\in \calU.
\]
Hence, according to Lemma \ref{lem:keylem2},
\[
\| T_{\alpha} f\|_{\frac {n+1}{\alpha}}^{\frac {n+1}{\alpha}} ~=~ \left(\frac {4\pi^n}{n!}\right)^{\frac {n+1}{\alpha}} \,
\int\limits_{\calU} \frac{dV(z)} {|\rho(z,\bfi)|^{n+1}}
~=~ +\infty.
\]
This show that $T_{\alpha}$ does not send $L^1(\calU)$ into $L^{\frac {n+1}{\alpha}}(\calU)$.

\subsection{Sufficiency}

The case $\alpha=n+1$ is well-known (see for instance \cite[Lemma 2.8]{CR80}).

Suppose now that $0< \alpha < n+1$ and \eqref{eqn:nscondn} hold. Put $Q_{\alpha}(z,w)= \bfrho(z,w)^{-\alpha}$. For any fixed $w\in \mathcal{U}$,
by Lemma \ref{lem:volume}, we have
\begin{eqnarray*}
\| Q_{\alpha}(\cdot, w)\|_{L^{\frac{n+1}{\alpha},\infty}}
&=& \sup_{\lambda>0}\, \lambda \cdot \big| \{z\in \mathcal{U}:|Q_{\alpha}(z,w)|>\lambda\}\big|^{\frac{\alpha}{n+1}} \\
&=& \sup_{\lambda>0}\, \lambda \cdot \left| \left\{z\in \mathcal{U}: |\bfrho(z,w)| < \lambda^{-\frac {1}{\alpha}} \right\}\right|^{\frac{\alpha}{n+1}} \\
&\leq&  \sup_{\lambda>0}\, \lambda \cdot \left(\frac {2^{n+1} \pi^n}{(n-1)!}\lambda^{-\frac{n+1}{\alpha}}\right)^{\frac{\alpha}{n+1}}\\
&=& \left(\frac {2^{n+1} \pi^n}{(n-1)!}\right)^{\frac{\alpha}{n+1}}
\end{eqnarray*}
for all $w\in \calU$. By symmetry,
\[
\| Q_{\alpha}(z,\cdot)\|_{L^{\frac{n+1}{\alpha},\infty}} ~\leq~ \left(\frac {2^{n+1} \pi^n}{(n-1)!}\right)^{\frac{\alpha}{n+1}}
\]
for all $z\in \calU$. Therefore, $T_{\alpha}$ is bounded from $L^{p}(\calU)$ to $L^{q}(\calU)$, by Lemma \ref{lem:variantofschur}.


\begin{thebibliography}{99}




\bibitem{BW10}
R. Beals and R. Wong, Special Functions: A Graduate Text. Cambridge Studies in Advanced Mathematics, 126.
Cambridge University Press, Cambridge, 2010.

\bibitem{CFWY17}
G. Cheng, X. Fang, Z. Wang and J. Yu. The hyper-singular cousin of the Bergman projection.
Trans. Amer. Math. Soc. 369 (2017), no. 12, 8643--8662.

\bibitem{CR80}
R. R. Coifman and R. Rochberg, Representation theorem for holomorphic and harmonic
functions in $L^p$, pp. 12-66, \emph{Ast\'{e}risque}, 77, Soc. Math. France, Paris, 1980.




%
%


%
%
%



\bibitem{Kra09}
S. G. Krantz, Explorations in Harmonic Analysis: with Applications to Complex Function Theory
and the Heisenberg Group. Birkh\"auser Boston, Inc., Boston, MA, 2009.


\bibitem{LLHZ17}
C. Liu, Y. Liu, P. Hu and L. Zhou. A class of integral operators over the Siegel upper half-space,
arXiv e-print (arXiv:1701.04074), 2017.

\bibitem{Rud80}
W. Rudin, Function Theory in the Unit Ball of $\mathbb{C}^n$, Reprint of the 1980 Edition, Springer, Berlin 2008.

\bibitem{Ste93}
E. M. Stein, Harmonic Analysis: Real-Variable Methods, Orthogonality, and Oscillatory Integrals.
Princeton University Press, Princeton, NJ, 1993.

\bibitem{Tao10}
T. Tao, An Epsilon of Room, I: Real Analysis. Pages from Year Three of a Mathematical Blog.
Graduate Studies in Mathematics, 117. American Mathematical Society, Providence, RI, 2010.

\bibitem{Zhu07}
K. Zhu, Operator Theory in Function Spaces. Second edition. Mathematical Surveys and Monographs, 138.
American Mathematical Society, Providence, RI, 2007.

\end{thebibliography}
\end{document}